\theoremstyle{plain}
\newtheorem{theorem}{Theorem}[section]
\theoremstyle{remark}
\newtheorem{remark}{Remark}
\theoremstyle{definition}
\newtheorem{definition}{Definition}
\def\a{{\alpha(\cdot,\cdot)}}
\def\b{{\beta(\cdot,\cdot)}}
\def\t{\tau}
\def\e{\epsilon}
\def\LI{{_aI_t^{\a}}}
\def\RI{{_tI_b^{\a}}}
\def\LDa{{_aD_t^{\a}}}
\def\LDb{{_aD_t^{\b}}}
\def\RDa{{_tD_b^{\a}}}
\def\RDb{{_tD_b^{\b}}}
\def\LC{{^C_aD_t^{\a}}}
\def\RCa{{^C_tD_b^{\a}}}
\def\RCb{{^C_tD_b^{\b}}}
\def\DC{{^CD_\gamma^{\a,\b}}}
\begin{document}

\title{Constrained fractional variational problems of variable order\thanks{This 
is a preprint of a paper whose final and definite form will appear 
in the \emph{IEEE/CAA Journal of Automatica Sinica}, ISSN 2329-9266. 
Submitted 05-Sept-2015; Revised 19-April-2016; Accepted 22-June-2016.}}

\author{Dina Tavares$^{{\rm a}, {\rm b}}$\\
{\tt dtavares@ipleiria.pt}
\and Ricardo Almeida$^{\rm b}$\\
{\tt ricardo.almeida@ua.pt}
\and Delfim F. M. Torres$^{\rm b}$\\
{\tt delfim@ua.pt}}

\date{$^{a}$ESECS, Polytechnic Institute of Leiria, 2410--272 Leiria, Portugal\\[0.3cm]
$^{b}$\text{Center for Research and Development in Mathematics and Applications (CIDMA)}\\
Department of Mathematics, University of Aveiro, 3810--193 Aveiro, Portugal}

\maketitle


\begin{abstract}
Isoperimetric problems consist in minimizing or maximizing a cost functional
subject to an integral constraint. In this work, we present two fractional
isoperimetric problems where the Lagrangian depends on a combined Caputo
derivative of variable fractional order and we present a new variational
problem subject to a holonomic constraint. We establish necessary optimality
conditions in order to determine the minimizers of the fractional problems.
The terminal point in the cost integral, as well the terminal state,
are considered to be free, and we obtain corresponding natural boundary conditions.

\bigskip

\noindent \textbf{Keywords}: isoperimetric constraints; holonomic constraints;
optimization; fractional calculus; variable fractional order;
fractional calculus of variations.

\smallskip

\noindent \textbf{Mathematics Subject Classification 2010}: 26A33; 34A08; 49K05.
\end{abstract}


\section{Introduction}

Many real world phenomena are better described by noninteger order derivatives.
In fact, fractional derivatives have unique characteristics that may model
certain dynamics more efficiently. To start, we can consider any real order
for the derivatives, and thus we are not restricted to integer-order derivatives
only. Secondly, they are nonlocal operators, in opposite to the usual derivatives,
containing memory. With the memory property one can take into account the past
of the processes. This subject, called \emph{Fractional Calculus}, although
as old as ordinary calculus itself, only recently has found numerous applications
in mathematics, physics, mechanics, biology and engineering. The order of the
derivative is assumed to be fixed along the process, that is, when determining
what is the order $\alpha>0$ such that the solution of the fractional differential
equation $D^\alpha y(t)=f(t,y(t))$ better approaches the experimental data,
we consider the order to be a fixed constant. Of course, this may not be the
best option, since trajectories are a dynamic process, and the order may vary.
So, the natural solution to this problem is to consider the order to be a function,
$\alpha(\cdot)$, depending on time. Then we may seek what is the best function
$\alpha(\cdot)$ such that the variable order fractional differential equation
$D^{\alpha(\cdot)} y(t)=f(t,y(t))$ better describes the model. This approach
is very recent, and many work has to be done for a complete study of the subject
(see, e.g., \cite{Atangana,Coimbra,SamkoRoss,Shenga,Valerio}).

The most common fractional operators considered in the literature
take into account the past of the process: they are usually called
left fractional operators. But in some cases we may be also interested
in the future of the process, and the computation of $\alpha(\cdot)$
to be influenced by it. In that case, right fractional derivatives
are then considered. Our goal is to develop a theory where both fractional
operators are taken into account, and for that we define a combined fractional
variable order derivative operator that is a linear combination of the left
and right fractional derivatives. For studies with fixed fractional order
see \cite{Malin:Tor,MyID:206,MyID:207}.

Variational problems are often subject to one or more constraints.
For example, isoperimetric problems are optimization problems where
the admissible functions are subject to integral constraints. This
direction of research has been recently investigated in \cite{Tavares},
where variational problems with dependence on a combined Caputo derivative
of variable fractional order are considered and necessary optimality conditions
deduced. Here variational problems are considered subject to integral
or holomonic constraints.

The text is organized in four sections. In Section~\ref{sec:FC} we review
some important definitions and results about combined Caputo derivative
of variable fractional order, and present some properties that will be need
in the sequel. For more on the subject we refer the interested reader to
\cite{Tatiana:IDOTA2011,Od,SamkoRoss}. In Section~\ref{sec:Iso} we present
two different isoperimetric problems and we study necessary optimality conditions
in order to determine the minimizers for each of the problems. We end
Section~\ref{sec:Iso} with an example. In Section~\ref{sec:Hol}
we consider a new variational problem subject to a holonomic constraint.


\section{Fractional calculus of variable order}
\label{sec:FC}

In this section we collect definitions and preliminary results on fractional
calculus, with variable fractional order, needed in the sequel. The variational
fractional order is a continuous function of two variables, $\alpha:[a,b]^2\to(0,1)$.
Let $x:[a,b]\to\mathbb{R}$. Two different types of fractional derivatives
are considered.

\begin{definition}[Riemann--Liouville fractional derivatives]
The left and right Riemann--Liouville fractional derivatives
of order $\a$ are defined respectively by
$$
\LDa x(t)=\frac{d}{dt}\int_a^t
\frac{1}{\Gamma(1-\alpha(t,\t))}(t-\t)^{-\alpha(t,\t)}x(\t)d\t
$$
and
$$
\RDa x(t)=\frac{d}{dt}\int_t^b
\frac{-1}{\Gamma(1-\alpha(\t,t))}(\t-t)^{-\alpha(\t,t)} x(\t)d\t.
$$
\end{definition}

\begin{definition}[Caputo fractional derivative]
The left and right Caputo fractional derivatives of order $\a$ are defined
respectively by
$$
\LC x(t)=\int_a^t
\frac{1}{\Gamma(1-\alpha(t,\t))}(t-\t)^{-\alpha(t,\t)}x^{(1)}(\t)d\t
$$
and
$$
\RCa x(t)=\int_t^b
\frac{-1}{\Gamma(1-\alpha(\t,t))}(\t-t)^{-\alpha(\t,t)}x^{(1)}(\t)d\t.
$$
\end{definition}

Of course the fractional derivatives just defined are linear operators.
The next step is to define a new fractional derivative,
combining the previous ones into a single one.

\begin{definition}
\label{def1}
Let $\alpha, \, \beta: [a,b]^2\rightarrow(0,1)$ be the fractional orders,
and define the constant vector $\gamma=(\gamma_1,\gamma_2) \in [0,1]^{2}$.
The combined Riemann--Liouville fractional derivative of a function $x$
is defined by
$$
D_\gamma^{\a,\b}x(t)=\gamma_1 \, \LDa x(t)+\gamma_2 \, \RDb x(t).
$$
The combined Caputo fractional derivative of a functions $x$ is defined by
$$
^{C}D_\gamma^{\a,\b}x(t)=\gamma_1 \, \LC x(t)+\gamma_2 \, \RCb x(t).
$$
\end{definition}

For the sequel, we also need the generalization of fractional integrals
for a variable order.

\begin{definition}[Riemann--Liouville fractional integrals]
The left and right Riemann--Liouville fractional integrals
of order $\a$ are defined respectively by
$$
\LI x(t)=\int_a^t \frac{1}{\Gamma(\alpha(t,\t))}(t-\t)^{\alpha(t,\t)-1}x(\t)d\t
$$
and
$$
\RI x(t)=\int_t^b\frac{1}{\Gamma(\alpha(\t,t))}(\t-t)^{\alpha(\t,t)-1} x(\t)d\t.
$$
\end{definition}

We remark that in contrast to the fixed fractional order case,
variable-order fractional integrals are not the inverse operation
of the variable-order fractional derivatives.

For the next section,
we need the following fractional integration by parts formulas.

\begin{theorem}[Theorem~3.2 of \cite{Od}]
\label{thm:FIP}
If $x,y \in C^1[a,b]$, then
$$
\int_{a}^{b}y(t) \, \LC x(t)dt
=\int_a^b x(t) \, {\RDa}y(t)dt
+\left[x(t) \, {_tI_b^{1-\a}}y(t) \right]_{t=a}^{t=b}
$$
and
$$
\int_{a}^{b}y(t) \, {\RCa}x(t)dt=\int_a^b x(t) \, {\LDa} y(t)dt
-\left[x(t) \, {_aI_t^{1-\a}}y(t)\right]_{t=a}^{t=b}.
$$
\end{theorem}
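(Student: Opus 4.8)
The plan is to reduce the variable-order identity to ordinary integration by parts, after transferring the fractional kernel from $x$ onto $y$ by swapping the order of integration. I would start from the left-hand side of the first identity, insert the definition of $\LC x(t)$, and write
\[
\int_a^b y(t)\, \LC x(t)\, dt
= \int_a^b y(t)\left( \int_a^t \frac{(t-\t)^{-\alpha(t,\t)}}{\Gamma(1-\alpha(t,\t))}\, x^{(1)}(\t)\, d\t \right) dt .
\]
The iterated integral is over the triangle $a \le \t \le t \le b$, so Fubini's theorem lets me exchange the order of integration and pull the $x^{(1)}(\t)$ factor outside:
\[
= \int_a^b x^{(1)}(\t)\left( \int_\t^b \frac{(t-\t)^{-\alpha(t,\t)}}{\Gamma(1-\alpha(t,\t))}\, y(t)\, dt \right) d\t .
\]
The key observation is that the inner integral is exactly the right Riemann--Liouville fractional integral ${_\t I_b^{1-\a}} y(\t)$ of order $1-\a$, read off from the definition of $\RI$ with $\a$ replaced by $1-\a$. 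Hence the whole expression equals $\int_a^b x^{(1)}(\t)\, {_\t I_b^{1-\a}} y(\t)\, d\t$.

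Next I would apply the classical (integer-order) integration by parts in the variable $\t$ to this last integral, producing the boundary term $\left[ x(\t)\, {_\t I_b^{1-\a}} y(\t)\right]_{\t=a}^{\t=b}$ together with $-\int_a^b x(\t)\, \frac{d}{d\t}\left( {_\t I_b^{1-\a}} y(\t)\right) d\t$. To finish, I would use the relation between the right Riemann--Liouville derivative and integral, namely $\RDa y(t) = -\frac{d}{dt}\, {_tI_b^{1-\a}} y(t)$, which is immediate from comparing the two definitions, the extra minus sign being built into the definition of $\RDa$. Substituting this identity turns $-\int_a^b x(\t)\, \frac{d}{d\t}\left( {_\t I_b^{1-\a}} y(\t)\right) d\t$ into $\int_a^b x(t)\, \RDa y(t)\, dt$, which yields the first claimed formula. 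The second identity follows by the same argument with the roles of the left and right operators interchanged: start from $\RCa x$, integrate over the triangle $a \le t \le \t \le b$, and use $\LDa y(t) = \frac{d}{dt}\, {_aI_t^{1-\a}} y(t)$; here the minus sign in the boundary term appears because the right Caputo derivative carries its own minus sign, which propagates through the computation.

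The main obstacle I anticipate is the rigorous justification of the interchange of integration order, since for $\alpha \in (0,1)$ the kernel $(t-\t)^{-\alpha(t,\t)}$ is singular along the diagonal $t=\t$. The saving feature is that the exponent $-\alpha(t,\t)$ lies in $(-1,0)$, so the singularity is integrable, and with $x \in C^1[a,b]$ making $x^{(1)}$ bounded and the continuity of $\alpha$ keeping $\Gamma(1-\alpha(t,\t))$ bounded away from zero, the double integral is absolutely convergent; Tonelli's theorem then validates the swap. A secondary point is ensuring that $\t \mapsto {_\t I_b^{1-\a}} y(\t)$ is regular enough for the classical integration by parts, which again follows from $y \in C^1$ together with the integrability of the kernel.
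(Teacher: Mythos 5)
The paper does not prove this theorem itself---it is imported as Theorem~3.2 of \cite{Od}---and your argument (Fubini over the triangle, recognizing the inner integral as the order-$(1-\a)$ fractional integral, then classical integration by parts combined with $\RDa y(t)=-\frac{d}{dt}\,{_tI_b^{1-\a}}y(t)$ and $\LDa y(t)=\frac{d}{dt}\,{_aI_t^{1-\a}}y(t)$) is precisely the standard derivation used there. The signs and the order of the arguments of $\alpha(\cdot,\cdot)$ in the kernels all come out correctly, so the proposal is correct and follows essentially the same route as the cited source.
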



\section{Fractional isoperimetric problems}
\label{sec:Iso}

Consider the set
$$
D=\left\{ (x,t)\in  C^1([a,b])\times [a,b] : \DC x(t) \,
\mbox{exists and is continuous on }\, [a,b]\right\},
$$
endowed with the norm
$$
\|(x,t)\|:=\max_{a\leq t \leq  b}|x(t)|
+\max_{a\leq t \leq b}\left| \DC x(t)\right|+|t|.
$$
Throughout the text, we denote by $\partial_i z$ the partial derivative
of a function $z:\mathbb{R}^{3} \rightarrow\mathbb{R}$ with respect
to its $i$th argument. Also, for simplification, we consider the operator
$$
[x]_\gamma^{\alpha, \beta}(t):=\left(t, x(t), \DC x(t)\right).
$$

The main problem of the fractional calculus of variations with variable order
is described as follows. Let $L:C^{1}\left([a,b]\times \mathbb{R}^2 \right)
\to\mathbb{R}$ and consider the functional
$\mathcal{J}:D\rightarrow \mathbb{R}$ of the form
\begin{equation}
\label{funct1}
\mathcal{J}(x,T)=\int_a^T L[x]_\gamma^{\alpha, \beta}(t) dt + \phi(T,x(T)),
\end{equation}
where $\phi:[a,b]\times \mathbb{R}\to\mathbb{R}$ is of class $C^1$. In the sequel,
we need the auxiliary notation of the dual fractional derivative:
\begin{equation}
\label{aux:FD}
D_{\overline{\gamma},c}^{\b,\a}=\gamma_2 \, {_aD_t^{\b}}+\gamma_1 \, {_tD_c^{\a}},
\quad \mbox{where} \quad \overline{\gamma}=(\gamma_2,\gamma_1)
\quad \mbox{and} \quad c\in(a,b].
\end{equation}

\begin{remark}
Fractional derivatives \eqref{aux:FD}
can be regarded as a generalization of usual
fractional derivatives. For advantages of applying 
them to fractional variational problems see
\cite{MR2314503,MR2944107,MyID:207}.
\end{remark}

In \cite{Tavares} we obtained necessary conditions that every local
minimizer of functional $\mathcal{J}$ must fulfill.

\begin{theorem}[See \cite{Tavares}]
\label{thm:tav:paper01}
If $(x,T)\in D$ is a local minimizer of functional \eqref{funct1},
then $(x,T)$ satisfies the fractional differential equation
$$
\partial_2 L[x]_\gamma^{\alpha, \beta}(t)
+D{_{\overline{\gamma},T}^{\b,\a}}\partial_3 L[x]_\gamma^{\alpha, \beta}(t)=0
$$
on $[a,T]$ and
$$
\gamma_2\left({\LDb}\partial_3 L[x]_\gamma^{\alpha, \beta}(t)
-{ _TD{_t^{\b}}\partial_3 L[x]_\gamma^{\alpha, \beta}(t)}\right)=0
$$
on $[T,b]$.
\end{theorem}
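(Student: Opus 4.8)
The plan is to use the standard technique from the calculus of variations: perturb the minimizer and derive the condition that the first variation vanishes. Since we are proving necessary conditions for a local minimizer $(x,T)$ of the functional \eqref{funct1}, I would introduce a one-parameter family of admissible variations. The subtlety here is that both the trajectory $x$ and the terminal time $T$ are free, so the variation must account for perturbations in both. I would write $x_\e = x + \e h$ where $h \in C^1([a,b])$ is an arbitrary variation, and $T_\e = T + \e \Delta T$ where $\Delta T \in \mathbb{R}$ is an arbitrary real number. Define $j(\e) = \mathcal{J}(x_\e, T_\e)$; since $(x,T)$ is a local minimizer and the combined Caputo derivative $\DC$ is linear, $j$ is differentiable near $\e = 0$ and must satisfy $j'(0) = 0$.

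The key computational step is to differentiate under the integral sign and apply the chain rule, using the linearity of $\DC$ to get $\DC x_\e = \DC x + \e\, \DC h$. Differentiating $j(\e)$ at $\e = 0$ produces three groups of terms: an integral over $[a,T]$ involving $\partial_2 L \cdot h(t)$ and $\partial_3 L \cdot \DC h(t)$; a boundary term $L[x]_\gamma^{\alpha,\beta}(T)\,\Delta T$ arising from differentiating the variable upper limit of integration; and the terms from differentiating $\phi(T,x(T))$, namely $\partial_1\phi \cdot \Delta T + \partial_2\phi \cdot (h(T) + x'(T)\Delta T)$. The crucial move is then to eliminate the $\DC h$ term by fractional integration by parts. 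Here I would apply Theorem~\ref{thm:FIP} to each piece of $\DC h = \gamma_1\, \LC h + \gamma_2\, \RCb h$, transferring the fractional derivative off of $h$ and onto $\partial_3 L$. This is what generates the dual fractional derivative operator $D_{\overline{\gamma},T}^{\b,\a}$ defined in \eqref{aux:FD}, together with associated boundary contributions at $t = a$, $t = T$, and $t = b$.

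After the integration by parts, the expression for $j'(0)$ becomes a sum of an integral over $[a,T]$ whose integrand multiplies $h(t)$, an integral or boundary piece over $[T,b]$ (this arises because the right Caputo derivative $\RCb$ reaches forward to $b$ while integration is only up to $T$), and several pointwise boundary terms multiplying $h(a)$, $h(T)$, and $\Delta T$. Since $h$ and $\Delta T$ are arbitrary and independent, I would invoke the fundamental lemma of the calculus of variations: choosing variations $h$ supported in the interior of $[a,T]$ forces the integrand there to vanish, yielding the first Euler--Lagrange equation $\partial_2 L + D_{\overline{\gamma},T}^{\b,\a}\partial_3 L = 0$ on $[a,T]$. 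Choosing variations supported in $(T,b)$ isolates the second stated equation, the condition involving $\gamma_2$ on $[T,b]$.

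The main obstacle I anticipate is bookkeeping the split of the domain at $T$ rather than $b$: because the functional integrates $L$ only up to $T$ but the right-sided fractional operators $\RCb$ and $\RDb$ are defined with $b$ as their terminal point, the integration-by-parts formula of Theorem~\ref{thm:FIP} (stated on $[a,b]$) must be adapted carefully so that the operator appearing in the Euler--Lagrange equation is $_tD_T^{\a}$ rather than $_tD_b^{\a}$, which is precisely why the dual derivative in \eqref{aux:FD} is defined with parameter $c = T$. Handling the leftover contributions on $[T,b]$ correctly, and confirming that they consolidate into the single $\gamma_2$-condition rather than contaminating the equation on $[a,T]$, is the delicate part; the remaining boundary terms (which would yield the natural boundary conditions mentioned in the abstract) are not asserted in this particular theorem statement and so can be set aside here.
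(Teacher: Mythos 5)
The paper does not reprove this theorem (it is imported from \cite{Tavares}), but your outline is correct and coincides with the method the authors themselves use for the analogous results here (e.g.\ the proof of Theorem~\ref{teo1}): variations in both $x$ and $T$, the splitting $\int_a^T=\int_a^b-\int_T^b$ to handle the right-sided operator ${\RCb}$, fractional integration by parts via Theorem~\ref{thm:FIP} to produce $D_{\overline{\gamma},T}^{\b,\a}$, and the fundamental lemma applied separately on $[a,T]$ and $(T,b)$. No gaps; the delicate point you flag (adapting the $[a,b]$ integration-by-parts formula so that $_tD_T^{\a}$ rather than $_tD_b^{\a}$ appears) is exactly the step the paper handles by that splitting.
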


\begin{remark}
In general, analytical solutions to fractional
variational problems are hard to find.
For this reason, numerical methods are often used.
The reader interested  in this subject is referred 
to \cite{MR3443073,MR3434464} and references therein.
\end{remark}

Fractional differential equations as the ones given
by Theorem~\ref{thm:tav:paper01}, are known in the literature
as fractional Euler--Lagrange equations, and they provide us with a method
to determine the candidates for solutions of the problem addressed. Solutions
of such fractional differential equations are called extremals for the functional.
In this paper, we proceed the study initiated in \cite{Tavares}
by considering additional constraints to the problems.
We will deal with two types of isoperimetric problems, which we now describe.


\subsection{Problem I}

The fractional isoperimetric problem of the calculus of variations consists to
determine the local minimizers of $\mathcal{J}$ over all $(x,T)\in D$
satisfying a boundary condition
\begin{equation}
\label{bcxa}
x(a)=x_a
\end{equation}
for a fixed $x_a\in \mathbb{R}$ and an integral constraint of the form
\begin{equation}
\label{IsoConst_1}
\int_a^T g[x]_\gamma^{\alpha, \beta}(t)dt=\psi(T),
\end{equation}
where $g:C^{1}\left([a,b]\times \mathbb{R}^2 \right)\to\mathbb{R}$
and $\psi: [a,b]\to \mathbb{R}$ are two differentiable functions. The terminal
time $T$ and terminal state $x(T)$ are free. In this problem, the condition
of the form \eqref{IsoConst_1} is called an isoperimetric constraint.
The next theorem gives fractional necessary optimality
conditions to this isoperimetric problem.

\begin{theorem}
\label{teo1}
Suppose that $(x,T)$ gives a local minimum  for functional \eqref{funct1} on
$D$ subject to the boundary condition \eqref{bcxa} and the isoperimetric
constraint \eqref{IsoConst_1}. If $(x,T)$ does not satisfies the Euler--Lagrange
equations with respect to the isoperimetric constraint, that is,
if one of the two following conditions are not verified,
\begin{equation}
\label{eq:eq1}
\partial_2 g[x]_\gamma^{\alpha, \beta}(t) + D_{\overline{\gamma},T}^{\b,\a}
\partial_3g[x]_\gamma^{\alpha, \beta}(t)=0, \quad t\in[a,T],
\end{equation}
or
\begin{equation}
\label{eq:eq2}
\gamma_2\left[ _aD_t^{\b} \partial_3 g[x]_\gamma^{\alpha, \beta}(t)
- {_TD_t}^{\b} \partial_3 g[x]_\gamma^{\alpha, \beta}(t) \right]=0,
\quad t\in[T,b],
\end{equation}
then there exists a constant $\lambda$ such that, if we define the function
$F:[a,b]\times \mathbb{R}^2\to\mathbb{R}$ by $F= L-\lambda g$,
$(x,T)$ satisfies the fractional Euler--Lagrange equations
\begin{equation}
\label{ELeq_1}
\partial_2 F[x]_\gamma^{\alpha, \beta}(t)
+D{_{\overline{\gamma},T}^{\b,\a}}\partial_3 F[x]_\gamma^{\alpha, \beta}(t)=0
\end{equation}
on the interval $[a,T]$ and
\begin{equation}
\label{ELeq_2}
\gamma_2\left({\LDb}\partial_3F[x]_\gamma^{\alpha, \beta}(t)
-{ _TD{_t^{\b}}\partial_3F[x]_\gamma^{\alpha, \beta}(t)}\right)=0
\end{equation}
on the interval $[T,b]$. Moreover,
$(x,T)$ satisfies the transversality conditions
\begin{equation}
\label{CT1}
\begin{cases}
F[x]_\gamma^{\alpha, \beta}(T)+\partial_1\phi(T,x(T))
+\partial_2\phi(T,x(T))x'(T)+\lambda \psi'(T)=0,\\
\left[\gamma_1 \, {_tI_T^{1-\a}\partial_3F[x]_\gamma^{\alpha, \beta}(t)}
-{\gamma_2 \, {_TI_t^{1-\b}\partial_3F[x]_\gamma^{\alpha, \beta}(t)}}\right]_{t=T}
+\partial_2 \phi(T,x(T))=0,\\
\gamma_2 \left[ {_TI_t^{1-\b}}\partial_3 F[x]_\gamma^{\alpha, \beta}(t)
-{_aI_t^{1-\b}\partial_3F[x]_\gamma^{\alpha, \beta}(t)}\right]_{t=b}=0.
\end{cases}
\end{equation}
\end{theorem}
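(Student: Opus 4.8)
The plan is to adapt the classical Lagrange multiplier method for isoperimetric problems to the present variable-order fractional framework, using the integration by parts formula of Theorem~\ref{thm:FIP} to transfer the fractional operators from the variations onto the Lagrangian. First I would embed $(x,T)$ into a two-parameter family of admissible curves,
$$
\hat{x}(t) = x(t) + \e_1 h_1(t) + \e_2 h_2(t), \qquad \hat{T} = T + \e_1 \, \Delta T,
$$
where $h_1,h_2 \in C^1[a,b]$ satisfy $h_1(a)=h_2(a)=0$ so that \eqref{bcxa} is preserved, $\Delta T \in \mathbb{R}$, and $\e_1,\e_2$ are small parameters; by linearity of $\DC$ one has $\DC \hat{x} = \DC x + \e_1 \DC h_1 + \e_2 \DC h_2$. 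The role of $\e_2 h_2$ is to restore the constraint, while $(\e_1, h_1, \Delta T)$ carries the genuine variation. Introduce the constraint function
$$
j(\e_1,\e_2) := \int_a^{\hat{T}} g[\hat{x}]_\gamma^{\alpha,\beta}(t)\,dt - \psi(\hat{T}),
$$
which satisfies $j(0,0)=0$ by \eqref{IsoConst_1}.

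The key nondegeneracy step uses the hypothesis that $(x,T)$ fails the Euler--Lagrange system \eqref{eq:eq1}--\eqref{eq:eq2} for $g$. Differentiating $j$ with respect to $\e_2$ at the origin and applying Theorem~\ref{thm:FIP} (splitting $\int_a^T = \int_a^b - \int_T^b$ so as to invoke the formula stated on $[a,b]$), the derivative $\partial j/\partial \e_2|_{(0,0)}$ becomes an integral of $h_2$ against exactly the left-hand sides of \eqref{eq:eq1} on $[a,T]$ and \eqref{eq:eq2} on $[T,b]$. Since at least one of these expressions is not identically zero, the fundamental lemma of the calculus of variations lets me choose $h_2$ with $\partial j/\partial \e_2|_{(0,0)} \neq 0$. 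The Implicit Function Theorem then yields a $C^1$ map $\e_2 = \e_2(\e_1)$ with $\e_2(0)=0$ and $j(\e_1,\e_2(\e_1)) \equiv 0$, so the varied curves obey the isoperimetric constraint. Because $(x,T)$ is a constrained local minimizer, $\e_1 = 0$ minimizes $\e_1 \mapsto \mathcal{J}(\hat{x},\hat{T})$ along this curve; setting its derivative to zero and eliminating $\e_2'(0)$ produces a constant $\lambda$ (the multiplier) such that $(x,T)$ is an unconstrained stationary point of the functional built from $F = L - \lambda g$.

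It then remains to extract the stated conditions from the vanishing first variation of the $F$-functional in the free direction $(\e_1, h_1, \Delta T)$. Differentiating $\int_a^{\hat T} F[\hat x]_\gamma^{\alpha,\beta}(t)\,dt + \phi(\hat T, \hat x(\hat T)) + \lambda\psi(\hat T)$ and applying Theorem~\ref{thm:FIP} to the term $\int_a^T \partial_3 F \cdot \DC h_1\,dt$, again via the splitting $\int_a^T = \int_a^b - \int_T^b$, reorganizes the interior contribution into $\int_a^T h_1 \left[\partial_2 F + D_{\overline{\gamma},T}^{\b,\a}\partial_3 F\right]dt$ plus a term on $[T,b]$ involving $\gamma_2\left(\LDb - {_TD_t^{\b}}\right)\partial_3 F$. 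Taking $h_1$ with support in $(a,T)$ and then in $(T,b)$, with $\Delta T = 0$, the fundamental lemma gives \eqref{ELeq_1} and \eqref{ELeq_2}. Finally, allowing $h_1(T) \neq 0$, $h_1(b) \neq 0$ and $\Delta T \neq 0$ and collecting the leftover boundary terms---namely $F[x]_\gamma^{\alpha,\beta}(T)\,\Delta T$ from the variable upper limit, the contributions $\partial_1\phi + \partial_2\phi\, x'(T)$ and $\lambda\psi'(T)$ from $\phi$ and $\psi$, the coefficients of $h_1(T)$ carrying the fractional integrals ${_tI_T^{1-\a}}$ and ${_TI_t^{1-\b}}$, and the coefficient of $h_1(b)$---yields the three transversality conditions in \eqref{CT1}.

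The main obstacle is precisely this bookkeeping across the split domain. Since the combined derivative $\DC$ contains a right operator $\RCb$ reaching all the way to $b$, the integrand on $[a,T]$ depends on the variation over the whole interval $[a,b]$; writing $\int_a^T = \int_a^b - \int_T^b$ to apply the $[a,b]$ integration by parts is what forces the appearance of the dual derivative $D_{\overline{\gamma},T}^{\b,\a}$ with reference point $T$ and generates the separate equation \eqref{ELeq_2} on $[T,b]$. Keeping track of which boundary terms (at $a$, at $T$, and at $b$) survive---especially the careful treatment of the free terminal time $T$, where both the upper limit of the cost integral and the base point of the right fractional operator move---is the delicate part of the argument.
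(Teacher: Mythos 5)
Your proposal is correct and follows essentially the same route as the paper: the two-parameter variation $(x+\e_1 h_1+\e_2 h_2,\,T+\e_1\Delta T)$, the splitting $\int_a^T=\int_a^b-\int_T^b$ combined with Theorem~\ref{thm:FIP} to expose the left-hand sides of \eqref{eq:eq1}--\eqref{eq:eq2}, the implicit function theorem to restore the constraint, and appropriate choices of $h_1$ and $\Delta T$ to separate \eqref{ELeq_1}, \eqref{ELeq_2} and \eqref{CT1}. The only cosmetic difference is that you obtain $\lambda$ by eliminating $\e_2'(0)$ from the composite derivative, whereas the paper invokes the finite-dimensional Lagrange multiplier rule for $j$ subject to $i=0$ --- these are the same argument.
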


\begin{proof}
Consider variations of the optimal solution $(x,T)$ of the type
\begin{equation}
\label{av}
(x^{*},T^{*})=\left(x+\e_{1}{h_{1}}+\e_{2}{h_{2}},T+\e_{1}\Delta{T}\right),
\end{equation}
where, for each $i \in \{1,2\}$, $\e_{i} \in\mathbb{R}$ is a small parameter,
$h_{i}\in C^1([a,b])$ satisfies $h_{i}(a)=0$, and $\triangle T \in\mathbb{R}$.
The additional term $\e_{2}{h_{2}}$ must be selected so that the admissible
variations $(x^{*},T^{*})$ satisfy the isoperimetric constraint \eqref{IsoConst_1}.
For a fixed choice of $h_{i}$, let
$$
i(\e_{1},\e_{2})=\int_a^{T+\e_{1} \triangle T}
g[x^{*}]_\gamma^{\alpha, \beta}(t)dt-\psi(T+\e_1\triangle T).
$$
For $\e_{1}=\e_{2}=0$, we obtain that
$$
i(0,0)=\int_a^{T} g[x]_\gamma^{\alpha, \beta}(t)dt-\psi(T)=\psi(T)-\psi(T)=0.
$$
The derivative $\dfrac{\partial i}{\partial \e_{2}}$ is given by
\begin{equation*}
\dfrac{\partial i}{\partial \e_{2}}=\int_a^{T+\e_{1} \triangle T} \left(
\partial_2 g[x^{*}]_\gamma^{\alpha, \beta}(t) h_{2}(t)
+ \partial_3 g[x^{*}]_\gamma^{\alpha, \beta}(t) \DC h_{2}(t) \right)dt.
\end{equation*}
For $\e_{1}=\e_{2}=0$ one has
\begin{equation}
\label{funct5}
\left. \dfrac{\partial i}{\partial \e_{2}} \right|_{(0,0)}=\int_a^{T} \left(
\partial_2 g[x]_\gamma^{\alpha, \beta}(t) h_{2}(t) \
+ \partial_3 g[x]_\gamma^{\alpha, \beta}(t) \DC h_{2}(t) \right)dt.
\end{equation}
The second term in \eqref{funct5} can be written as
\begin{equation}
\label{term2}
\begin{split}
\int_a^T  &\partial_3 g[x]_\gamma^{\alpha, \beta}(t) \DC h_{2}(t) dt\\
&=\int_a^T \partial_3 g[x]_\gamma^{\alpha, \beta}(t)\left[\gamma_1
\, \LC h_{2}(t)+\gamma_2 \, \RCb h_{2}(t)\right]dt\\
&=\gamma_1 \int_a^T \partial_3 g[x]_\gamma^{\alpha, \beta}(t)\LC h_{2}(t)dt  \\
&\quad + \gamma_2 \left[ \int_a^b \partial_3
g[x]_\gamma^{\alpha, \beta}(t) \RCb h_{2}(t)dt
- \int_T^b \partial_3 g[x]_\gamma^{\alpha, \beta}(t) \RCb h_{2}(t)dt \right].
\end{split}
\end{equation}
Using the fractional integrating by parts formula, \eqref{term2} is equal to
\begin{equation*}
\begin{split}
\int_a^T & h_{2}(t) \left[ \gamma_1 {_tD_T}^{\a} \partial_3
g[x]_\gamma^{\alpha, \beta}(t) + \gamma_2 {_aD_t}^{\b}
\partial_3 g[x]_\gamma^{\alpha, \beta}(t) \right] dt\\
+ &  \int_T^b \gamma_2 h_{2}(t) \left[ _aD_t^{\b} \partial_3
g[x]_\gamma^{\alpha, \beta}(t) - {_TD_t}^{\b} \partial_3
g[x]_\gamma^{\alpha, \beta}(t) \right] dt\\
+ & \Biggl[ h_{2}(t) \left( \gamma_1 {{_tI_T}}^{1-\a} \partial_3
g[x]_\gamma^{\alpha, \beta}(t) - \gamma_2 {_TI_t}^{1-\b} \partial_3
g[x]_\gamma^{\alpha, \beta}(t) \Biggr) \right]_{t=T}\\
+ & \Biggl[ \gamma_2 h_{2}(t)  \left( {{_TI_t}}^{1-\b} \partial_3
g[x]_\gamma^{\alpha, \beta}(t) - {_aI_t}^{1-\b} \partial_3
g[x]_\gamma^{\alpha, \beta}(t) \Biggr) \right]_{t=b}.
\end{split}
\end{equation*}
Substituting these relations into \eqref{funct5}, and considering the fractional
operator $D_{\overline{\gamma},c}^{\b,\a}$
as defined in \eqref{aux:FD}, we obtain that
\begin{equation*}
\begin{split}
\left.\dfrac{\partial i}{\partial \e_{2}} \right|_{(0,0)}
= \int_a^T & h_{2}(t) \left[ \partial_2 g[x]_\gamma^{\alpha, \beta}(t)
+ D_{\overline{\gamma},T}^{\b,\a}\partial_3
g[x]_\gamma^{\alpha, \beta}(t) \right] dt\\
&+ \int_T^b \gamma_2 h_{2}(t) \left[ _aD_t^{\b} \partial_3
g[x]_\gamma^{\alpha, \beta}(t) - {_TD_t}^{\b} \partial_3
g[x]_\gamma^{\alpha, \beta}(t) \right] dt\\
&+ \Biggl[ h_{2}(t) \left( \gamma_1 {{_tI_T}}^{1-\a} \partial_3
g[x]_\gamma^{\alpha, \beta}(t) - \gamma_2 {_TI_t}^{1-\b} \partial_3
g[x]_\gamma^{\alpha, \beta}(t) \Biggr) \right]_{t=T}\\
&+ \Biggl[ \gamma_2 h_{2}(t)  \left( {{_TI_t}}^{1-\b} \partial_3
g[x]_\gamma^{\alpha, \beta}(t) - {_aI_t}^{1-\b} \partial_3
g[x]_\gamma^{\alpha, \beta}(t) \Biggr) \right]_{t=b}.
\end{split}
\end{equation*}
Since \eqref{eq:eq1} or \eqref{eq:eq2} fails, there exists a function $h_{2}$
such that
$$
\left.\dfrac{\partial i}{\partial \e_{2}} \right|_{(0,0)}\neq 0.
$$
In fact, if not, from the arbitrariness of the function $h_2$ and the
fundamental lemma of the calculus of the variations, \eqref{eq:eq1}
and \eqref{eq:eq2} would be verified. Thus, we may apply the implicit
function theorem, that ensures the existence of a function $\e_{2}(\cdot)$,
defined in a neighborhood of zero, such that $i(\e_1,\e_2(\e_1))=0$.
In conclusion, there exists a subfamily of variations of the form \eqref{av}
that verifies the integral constraint \eqref{IsoConst_1}. We now seek to prove
the main result. For that purpose, consider the auxiliary function
$j(\e_1,\e_2)=\mathcal{J}(x^{*},T^{*})$.
By hypothesis, function $j$ attains a local minimum at $(0,0)$ when subject
to the constraint $i(\cdot,\cdot)=0$, and we proved before that
$\nabla i(0,0)\not=0$. Applying the Lagrange multiplier rule, we ensure
the existence of a number $\lambda$ such that
$$
\nabla \left(j(0,0)-\lambda i(0,0)\right)=0.
$$
In particular,
\begin{equation}
\label{funct7}
\dfrac{\partial \left(j-\lambda i\right)}{\partial \e_{1}} (0,0)=0.
\end{equation}
Let $F=L-\lambda g$. The relation \eqref{funct7} can be written as
\begin{equation}
\label{funct8}
\begin{split}
0= &\int_a^Th_{1}(t)\left[\partial_2 F[x]_\gamma^{\alpha, \beta}(t)
+ D_{\overline{\gamma},T}^{\b,\a}\partial_3 F[x]_\gamma^{\alpha, \beta}(t)\right]dt\\
&+ \int_T^b \gamma_2 h_{1}(t) \left[_aD_t^{\b}\partial_3 F[x]_\gamma^{\alpha, \beta}(t)
-{_TD_t^{\b}\partial_3 F[x]_\gamma^{\alpha, \beta}(t)}\right]dt\\
&+ h_{1}(T)\left[\gamma_1 \, {_tI_T^{1-\a}\partial_3F[x]_\gamma^{\alpha, \beta}(t)}
-{\gamma_2 \, {_TI_t^{1-\b}\partial_3F[x]_\gamma^{\alpha, \beta}(t)}}
+\partial_2 \phi(t,x(t))\right]_{t=T}\\
&+\Delta T \left[ F[x]_\gamma^{\alpha, \beta}(t)+\partial_1\phi(t,x(t))
+\partial_2\phi(t,x(t))x'(t)+\lambda\psi'(t)  \right]_{t=T}\\
&+ h_{1}(b) \gamma_2  \left[ _TI_t^{1-\b}\partial_3 F[x]_\gamma^{\alpha, \beta}(t)
-{_aI_t^{1-\b}\partial_3F[x]_\gamma^{\alpha, \beta}(t)} \right]_{t=b}.
\end{split}
\end{equation}
As $h_{1}$ and $\triangle T$ are arbitrary, we can choose $\triangle T=0$ and
$h_{1}(t)=0$ for all $t\in[T,b]$. But $h_{1}$ is arbitrary in $t\in[a,T)$.
Then, we obtain the first necessary condition \eqref{ELeq_1}:
$$
\partial_2 F[x]_\gamma^{\alpha, \beta}(t)
+D{_{\overline{\gamma},T}^{\b,\a}}\partial_3 F[x]_\gamma^{\alpha, \beta}(t)=0
\quad \forall t \in [a,T].
$$
Analogously, considering $\triangle T=0$ and $h_{1}(t)=0$
for all $t\in[a,T]\cup\{b\}$, and $h_{1}$ arbitrary on $(T,b)$,
we obtain the second necessary condition \eqref{ELeq_2}:
$$
\gamma_2\left({\LDb}\partial_3F[x]_\gamma^{\alpha, \beta}(t)
-{ _TD{_t^{\b}}\partial_3F[x]_\gamma^{\alpha, \beta}(t)}\right)=0
\quad \forall t \in [T,b].
$$
As $(x,T)$ is a solution to the necessary conditions \eqref{ELeq_1}
and \eqref{ELeq_2}, then equation \eqref{funct8} takes the form
\begin{equation}
\label{eq_derj3}
\begin{split}
0=&h_{1}(T)\left[\gamma_1 \, {_tI_T^{1-\a}\partial_3F[x]_\gamma^{\alpha, \beta}(t)}
-{\gamma_2 \, {_TI_t^{1-\b}\partial_3F[x]_\gamma^{\alpha, \beta}(t)}}
+\partial_2 \phi(t,x(t))\right]_{t=T}\\
&+\Delta T \left[ F[x]_\gamma^{\alpha, \beta}(t)+\partial_1\phi(t,x(t))
+\partial_2\phi(t,x(t))x'(t)+\lambda\psi'(t)  \right]_{t=T}\\
&+ h_{1}(b)\left[\gamma_2  \left( _TI_t^{1-\b}\partial_3 F[x]_\gamma^{\alpha, \beta}(t)
-{_aI_t^{1-\b}\partial_3F[x]_\gamma^{\alpha, \beta}(t)}\right) \right]_{t=b}.
\end{split}
\end{equation}
Transversality conditions \eqref{CT1} are obtained
for appropriate choices of variations.
\end{proof}

In the next theorem, considering the same Problem I,
we rewrite the transversality conditions \eqref{CT1} in terms of the increment
on time $\Delta T$ and on the increment of space $\Delta x_T$ given by
\begin{equation}
\label{Dxt}
\Delta x_T = (x+h_1)(T+\Delta T)-x(T).
\end{equation}

\begin{theorem}
\label{teo2}
Let $(x,T)$ be a local minimizer to the functional \eqref{funct1} on $D$ subject
to the  boundary condition \eqref{bcxa} and the isoperimetric constraint
\eqref{IsoConst_1}. Then $(x,T)$ satisfies the transversality conditions
\begin{equation}
\label{CT2}
\begin{cases}
F[x]_\gamma^{\alpha, \beta}(T)+\partial_1\phi(T,x(T))+\lambda \psi'(T)\\
\qquad + x'(T) \left[ \gamma_2 {_TI}_t^{1-\b} \partial_3F[x]_\gamma^{\alpha, \beta}(t)
- \gamma_1 {_tI_T^{1-\a} \partial_3F[x]_\gamma^{\alpha, \beta}(t)} \right]_{t=T} =0,\\
\left[\gamma_1 \, {_tI_T^{1-\a}\partial_3F[x]_\gamma^{\alpha, \beta}(t)}
-{\gamma_2 \, {_TI_t^{1-\b}\partial_3F[x]_\gamma^{\alpha, \beta}(t)}}\right]_{t=T}
+\partial_2 \phi(T,x(T))=0,\\
\gamma_2 \left[ {_TI_t^{1-\b}}\partial_3 F[x]_\gamma^{\alpha, \beta}(t)
-{_aI_t^{1-\b}\partial_3F[x]_\gamma^{\alpha, \beta}(t)}\right]_{t=b}=0.
\end{cases}
\end{equation}
\end{theorem}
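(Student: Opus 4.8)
The plan is to reuse equation \eqref{eq_derj3} established in the proof of Theorem~\ref{teo1}. Once the extremal $(x,T)$ satisfies the Euler--Lagrange equations \eqref{ELeq_1}--\eqref{ELeq_2}, the entire first variation reduces to the boundary expression \eqref{eq_derj3}, a linear combination of the free quantities $h_1(T)$, $\Delta T$ and $h_1(b)$. The theorem amounts to a reformulation: to replace the analytic quantity $h_1(T)$ by the geometric increment $\Delta x_T$ of \eqref{Dxt}, and then to read off the transversality conditions from the independence of the free quantities.

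First I would relate $h_1(T)$ and $\Delta x_T$. Expanding \eqref{Dxt} about $(T,x(T))$ and retaining only first-order terms,
$$
\Delta x_T=(x+h_1)(T+\Delta T)-x(T)=x'(T)\,\Delta T+h_1(T)+\text{(higher order)},
$$
so that $h_1(T)=\Delta x_T-x'(T)\,\Delta T$. Substituting this into \eqref{eq_derj3} and regrouping turns the boundary expression into a linear combination of $\Delta x_T$, $\Delta T$ and $h_1(b)$.

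Next I would collect coefficients. The coefficient of $\Delta x_T$ is exactly the bracket that multiplied $h_1(T)$ in \eqref{eq_derj3}, i.e.\ the second line of \eqref{CT2}, and the coefficient of $h_1(b)$ is untouched and yields the third line. The coefficient of $\Delta T$ is the original $\Delta T$-bracket minus $x'(T)$ times the $h_1(T)$-bracket; in this difference the two terms $\partial_2\phi(T,x(T))\,x'(T)$ cancel, and what remains is precisely the first line of \eqref{CT2}. Since $\Delta x_T$, $\Delta T$ and $h_1(b)$ are arbitrary and independent, each coefficient must vanish separately, which gives \eqref{CT2}.

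The only step needing justification is that $\Delta x_T$ and $\Delta T$ may legitimately be used as independent free parameters in place of $h_1(T)$ and $\Delta T$. This holds because the map $(h_1(T),\Delta T)\mapsto(\Delta x_T,\Delta T)$ is linear with triangular, unit-diagonal Jacobian, hence invertible; the constraint-enforcing variation $h_2$, already fixed in the proof of Theorem~\ref{teo1}, does not re-enter the argument. I expect this bookkeeping, rather than any genuine analytic difficulty, to be the main thing to get right.
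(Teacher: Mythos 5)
Your proposal is correct and follows essentially the same route as the paper: substitute $h_1(T)=\Delta x_T-x'(T)\,\Delta T$ (obtained from a first-order Taylor expansion of \eqref{Dxt}) into \eqref{eq_derj3} and read off the coefficients of the independent increments $\Delta x_T$, $\Delta T$ and $h_1(b)$. Your explicit verification of the cancellation of the $\partial_2\phi(T,x(T))\,x'(T)$ terms and of the invertibility of the change of free parameters merely fills in details the paper leaves implicit (the paper additionally restricts to variations with $h_1'(T)=0$ to justify the expansion).
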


\begin{proof}
Suppose $(x^{*},T^{*})$ is an admissible variation of the form \eqref{av}
with $\epsilon_1=1$ and $\epsilon_2=0$. Using Taylor's expansion up to first
order for a small $\Delta T$, and restricting the set of variations to those
for which $h_{1}'(T)=0$, we obtain the increment $\Delta x_T$ on $x$:
$$
(x+h_{1})\left( T + \Delta T \right)=(x+h_{1})(T)+x'(T) \Delta T + O(\Delta T)^{2}.
$$
Relation \eqref{Dxt} allows us to express $h_{1}(T)$
in terms of $\Delta T$ and $\Delta x_T$:
$$
h_{1}(T)= \Delta x_T - x'(T) \Delta T + O(\Delta T)^{2}.
$$
Substitution this expression into \eqref{eq_derj3}, and using appropriate
choices of variations, we obtain the new transversality conditions \eqref{CT2}.
\end{proof}

\begin{theorem}
\label{teo3}
Suppose that $(x,T)$ gives a local minimum  for functional \eqref{funct1} on $D$
subject to the  boundary condition \eqref{bcxa} and the isoperimetric constraint
\eqref{IsoConst_1}. Then, there exists $(\lambda_{0}, \lambda)\neq(0,0)$ such
that, if we define the function $F:[a,b]\times \mathbb{R}^2 \to\mathbb{R}$ by
$F=\lambda_{0} L-\lambda g$, $(x,T)$ satisfies the following fractional
Euler--Lagrange equations:
\begin{equation*}
\partial_2 F[x]_\gamma^{\alpha, \beta}(t)
+D{_{\overline{\gamma},T}^{\b,\a}}\partial_3 F[x]_\gamma^{\alpha, \beta}(t)=0
\end{equation*}
on the interval $[a,T]$, and
\begin{equation*}
\gamma_2\left({\LDb}\partial_3F[x]_\gamma^{\alpha, \beta}(t)
-{ _TD{_t^{\b}}\partial_3F[x]_\gamma^{\alpha, \beta}(t)}\right)=0
\end{equation*}
on the interval $[T,b]$.
\end{theorem}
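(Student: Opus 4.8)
The plan is to establish the result by a dichotomy on whether the minimizer $(x,T)$ is itself an extremal of the isoperimetric constraint functional, exploiting the linearity of the relevant operators. Since the combined operator $D_{\overline{\gamma},T}^{\b,\a}$ and the expression $\gamma_2\left(\LDb\,\cdot-{_TD_t^\b}\,\cdot\right)$ are linear, for $F=\lambda_0 L-\lambda g$ one has $\partial_2 F=\lambda_0\,\partial_2 L-\lambda\,\partial_2 g$ and $\partial_3 F=\lambda_0\,\partial_3 L-\lambda\,\partial_3 g$, so each Euler--Lagrange expression in $F$ splits as $\lambda_0$ times the expression in $L$ minus $\lambda$ times the expression in $g$. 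This structural observation is what makes both cases transparent, and it is the only algebraic input needed.

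First I would suppose that $(x,T)$ fails to satisfy at least one of the constraint Euler--Lagrange equations \eqref{eq:eq1} and \eqref{eq:eq2}. This is precisely the hypothesis of Theorem~\ref{teo1}, which then supplies a constant $\lambda$ for which $(x,T)$ satisfies \eqref{ELeq_1} and \eqref{ELeq_2} with $F=L-\lambda g$. It therefore suffices to set $\lambda_0=1$ and retain the same $\lambda$; then $(\lambda_0,\lambda)=(1,\lambda)\neq(0,0)$, and $F=\lambda_0 L-\lambda g$ is exactly the Lagrangian of Theorem~\ref{teo1}, so the two desired Euler--Lagrange equations hold verbatim.

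Second I would suppose the complementary case, in which $(x,T)$ satisfies \emph{both} \eqref{eq:eq1} and \eqref{eq:eq2}. Here I would choose the abnormal multipliers $\lambda_0=0$ and $\lambda=1$, so that $(\lambda_0,\lambda)=(0,1)\neq(0,0)$ and $F=-g$. By the linearity noted above, on $[a,T]$ one has $\partial_2 F+D_{\overline{\gamma},T}^{\b,\a}\partial_3 F=-\left(\partial_2 g+D_{\overline{\gamma},T}^{\b,\a}\partial_3 g\right)$, which vanishes by \eqref{eq:eq1}; and on $[T,b]$ the expression reduces to $-\gamma_2\left(\LDb\partial_3 g-{_TD_t^\b}\partial_3 g\right)$, which vanishes by \eqref{eq:eq2}. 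Thus both required equations are satisfied with this $F$.

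Since the two cases are exhaustive and each produces a pair $(\lambda_0,\lambda)\neq(0,0)$ with the stated properties, the theorem follows. I do not anticipate a genuine obstacle: unlike Theorem~\ref{teo1}, this statement requires no new variational computation, the integration-by-parts work having already been carried out there. The only point demanding care is the bookkeeping of the case split, namely verifying that the situation excluded from Theorem~\ref{teo1} coincides exactly with the case in which \eqref{eq:eq1} and \eqref{eq:eq2} both hold, so that the two cases together account for every minimizer.
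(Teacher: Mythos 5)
Your proof is correct and follows essentially the same route as the paper: a dichotomy on whether $(x,T)$ satisfies both constraint Euler--Lagrange equations \eqref{eq:eq1} and \eqref{eq:eq2}, invoking Theorem~\ref{teo1} with $\lambda_0=1$ in the negative case and taking $(\lambda_0,\lambda)=(0,1)$ in the affirmative case. Your explicit verification via linearity in the abnormal case is slightly more detailed than the paper's one-line treatment, but the argument is identical in substance.
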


\begin{proof}
If $(x,T)$ does not verifies \eqref{eq:eq1} or \eqref{eq:eq2}, then the
hypothesis of Theorem~\ref{teo1} is satisfied and we prove Theorem~\ref{teo3}
considering $\lambda_0=1$. If $(x,T)$ verifies \eqref{eq:eq1} and \eqref{eq:eq2},
then we prove the result by considering $\lambda=1$ and $\lambda_{0}=0$.
\end{proof}


\subsection{Problem II}

We now consider a new isoperimetric type problem
with the isoperimetric constraint of form
\begin{equation}
\label{IsoConst_2}
\int_a^b g[x]_\gamma^{\alpha, \beta}(t)dt=C,
\end{equation}
where $C$ is a given real number.

\begin{theorem}
\label{teo4}
Suppose that $(x,T)$ gives a local minimum for functional \eqref{funct1} on $D$
subject to the  boundary condition \eqref{bcxa} and the isoperimetric constraint
\eqref{IsoConst_2}. If $(x,T)$ does not satisfies the Euler--Lagrange equation
with respect to the isoperimetric constraint, that is, the condition
\begin{equation*}
\partial_2 g[x]_\gamma^{\alpha, \beta}(t) + D_{\overline{\gamma},b}^{\b,\a}
\partial_3g[x]_\gamma^{\alpha, \beta}(t)=0, \quad t\in[a,b],
\end{equation*}
is not satisfied, then there exists $\lambda\neq0$ such that,
if we define the function $F:[a,b]\times \mathbb{R}^2 \to\mathbb{R}$ by $F=L-\lambda g$,
$(x,T)$ satisfies the fractional Euler--Lagrange equations
\begin{equation}
\label{ELeq_5}
\partial_2 F[x]_\gamma^{\alpha, \beta}(t)
+ D_{\overline{\gamma},T}^{\b,\a}\partial_3 L[x]_\gamma^{\alpha, \beta}(t)
-\lambda D_{\overline{\gamma},b}^{\b,\a}\partial_3 g[x]_\gamma^{\alpha, \beta}(t)
\end{equation}
on the interval $[a,T]$, and
\begin{equation}
\label{ELeq_6}
\gamma_2 \left(_aD_t^{\b}\partial_3 F[x]_\gamma^{\alpha, \beta}(t)
-{_TD_t^{\b}\partial_3 L[x]_\gamma^{\alpha, \beta}(t)} \right)
-\lambda \left(\partial_2 g[x]_\gamma^{\alpha, \beta}(t)
+\gamma_1 {_tD_{b}^{\a}}\partial_3 g[x]_\gamma^{\alpha, \beta}(t)\right)
\end{equation}
on the interval $[T,b]$. Moreover, $(x,T)$ satisfies
the transversality conditions
\begin{equation}
\label{CT3}
\begin{cases}
L[x]_\gamma^{\alpha, \beta}(T)+\partial_1\phi(T,x(T))+\partial_2\phi(T,x(T))x'(T) =0,\\
\left[\gamma_1 \, {_tI_T^{1-\a}\partial_3L[x]_\gamma^{\alpha, \beta}(t)}
-{\gamma_2 \, {_TI_t^{1-\b}\partial_3L[x]_\gamma^{\alpha, \beta}(t)}}
+\partial_2 \phi(t,x(t)) \right]_{t=T}=0\\
\left[-\lambda \gamma_1 {_tI_b^{1-\a}\partial_3 g[x]_\gamma^{\alpha, \beta}(t)}
+\gamma_2 \left({_TI_t^{1-\b}\partial_3 L[x]_\gamma^{\alpha, \beta}(t)}
-{_aI_t^{1-\b}\partial_3F[x]_\gamma^{\alpha, \beta}(t)} \right) \right]_{t=b}=0.
\end{cases}
\end{equation}
\end{theorem}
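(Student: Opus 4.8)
The plan is to adapt the variational argument of Theorem~\ref{teo1} to the present setting, the essential new feature being that the isoperimetric constraint \eqref{IsoConst_2} is now taken over the \emph{fixed} interval $[a,b]$ rather than over the variable interval $[a,T]$. I would first consider the same two-parameter family of admissible variations \eqref{av}, $(x^{*},T^{*})=(x+\e_1 h_1+\e_2 h_2,\,T+\e_1\Delta T)$ with $h_i\in C^1([a,b])$ and $h_i(a)=0$, and set
\[
i(\e_1,\e_2)=\int_a^b g[x^{*}]_\gamma^{\alpha,\beta}(t)\,dt-C .
\]
In contrast with Theorem~\ref{teo1}, the upper limit here does not depend on $\e_1$. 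Differentiating in $\e_2$ at $(0,0)$ and integrating by parts by means of Theorem~\ref{thm:FIP} applied \emph{directly} on $[a,b]$ (no splitting is needed), together with $h_2(a)=0$, gives $\left.\partial i/\partial\e_2\right|_{(0,0)}=\int_a^b h_2\bigl[\partial_2 g+D_{\overline{\gamma},b}^{\b,\a}\partial_3 g\bigr]dt$ plus a boundary term at $t=b$, with $D_{\overline{\gamma},b}^{\b,\a}$ as in \eqref{aux:FD}. Restricting to $h_2$ that also vanish at $b$ removes the boundary term, so if this quantity vanished for all such $h_2$ the fundamental lemma would force $\partial_2 g+D_{\overline{\gamma},b}^{\b,\a}\partial_3 g\equiv 0$ on $[a,b]$, contradicting the hypothesis. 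Hence some $h_2$ makes it nonzero, and the implicit function theorem produces a $C^1$ map $\e_2(\cdot)$ near $0$ with $i(\e_1,\e_2(\e_1))=0$, i.e.\ a subfamily of \eqref{av} obeying \eqref{IsoConst_2}.

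Writing $j(\e_1,\e_2)=\mathcal{J}(x^{*},T^{*})$ and noting that $j$ has a constrained local minimum at $(0,0)$ with $\nabla i(0,0)\neq0$, the Lagrange multiplier rule supplies a $\lambda$ with $\partial(j-\lambda i)/\partial\e_1(0,0)=0$. The decisive structural point, which accounts for the form of \eqref{ELeq_5}--\eqref{CT3}, is the asymmetry between the two functionals: since $\mathcal{J}$ integrates $L$ over the \emph{variable} interval $[a,T]$, differentiating $j$ yields an interior term $\int_a^T(\partial_2 L\,h_1+\partial_3 L\,\DC h_1)\,dt$, a $\Delta T$ term $[L+\partial_1\phi+\partial_2\phi\,x']_{t=T}\,\Delta T$, and an $h_1(T)$ contribution from $\phi$; whereas the constraint integrates $g$ over the \emph{fixed} interval $[a,b]$, so $\partial i/\partial\e_1$ contributes neither a $\Delta T$ term nor any term at $t=T$. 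This is exactly why the first two conditions in \eqref{CT3} involve $L$ rather than $F$ and carry no $\lambda\psi'$ term.

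I would then integrate both interior terms by parts. For the $L$-term I reproduce the splitting device of Theorem~\ref{teo1}, decomposing $\gamma_2\int_a^T\partial_3 L\,\RCb h_1\,dt=\gamma_2\left(\int_a^b-\int_T^b\right)\partial_3 L\,\RCb h_1\,dt$, applying Theorem~\ref{thm:FIP} on $[a,T]$ and on $[a,b]$, and reassembling via $D_{\overline{\gamma},T}^{\b,\a}$ (upper limit $T$); this produces interior integrals over $[a,T]$ and over $[T,b]$ plus boundary terms at $t=T$ and $t=b$. For the $g$-term I apply Theorem~\ref{thm:FIP} once on $[a,b]$, obtaining $\int_a^b h_1\bigl(\partial_2 g+D_{\overline{\gamma},b}^{\b,\a}\partial_3 g\bigr)dt$ (upper limit $b$) plus a boundary term at $t=b$, and I split this integral as $\int_a^T+\int_T^b$ to align with the two subintervals. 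Assembling $\partial(j-\lambda i)/\partial\e_1=0$ and setting $F=L-\lambda g$ then packages the $[a,T]$ integrands into \eqref{ELeq_5} and the $[T,b]$ integrands into \eqref{ELeq_6}; expanding $\partial_2 F=\partial_2 L-\lambda\partial_2 g$ and $\partial_3 F=\partial_3 L-\lambda\partial_3 g$ verifies that the mixed appearance of $\partial_3 L$ (under $D_{\overline{\gamma},T}^{\b,\a}$) and $\partial_3 g$ (under $D_{\overline{\gamma},b}^{\b,\a}$) is precisely what those equations assert.

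Finally, taking $h_1$ supported in $(a,T)$ with $\Delta T=0$ and invoking the fundamental lemma yields \eqref{ELeq_5}, and $h_1$ supported in $(T,b)$ yields \eqref{ELeq_6}. Once the interior integrals vanish, the residual identity reads $0=h_1(T)[\cdots]_{t=T}+\Delta T[\cdots]_{t=T}+h_1(b)[\cdots]_{t=b}$, and the mutual independence of $\Delta T$, $h_1(T)$ and $h_1(b)$ (for $T<b$) isolates the three conditions in \eqref{CT3}; at $t=b$, combining the $\partial_3 L$ boundary data from $j$ with the $-\lambda\partial_3 g$ boundary data from $i$ reproduces the mixed expression $\gamma_2\bigl({}_TI_t^{1-\b}\partial_3 L-{}_aI_t^{1-\b}\partial_3 F\bigr)-\lambda\gamma_1\,{}_tI_b^{1-\a}\partial_3 g$. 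I expect the main obstacle to be the bookkeeping forced by this asymmetry: keeping the upper limit $T$ attached to every $L$-derivative and the limit $b$ to every $g$-derivative, and correctly attributing each boundary term to $t=T$ or $t=b$, since the constraint supplies no term at $t=T$ (and no $\lambda\psi'$) while the cost supplies none of the $b$-terms.
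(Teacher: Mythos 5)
Your proposal is correct and follows essentially the same route as the paper's own proof: the same two-parameter family of variations, the same auxiliary functions $i$ and $j$, the implicit function theorem plus the Lagrange multiplier rule, integration by parts on $[a,b]$ for the $g$-term and the $[a,T]$/$[T,b]$ splitting device for the $L$-term, and finally appropriate choices of $h_1$ and $\Delta T$ to isolate \eqref{ELeq_5}, \eqref{ELeq_6} and \eqref{CT3}. Your explicit accounting of the asymmetry between the variable upper limit $T$ in the cost and the fixed limit $b$ in the constraint is a welcome clarification of a step the paper leaves implicit.
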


\begin{proof}
Similarly as done to prove Theorem~\ref{teo1}, let
$$
(x^{*},T^{*})=\left(x+\e_{1}{h_{1}}+\e_{2}{h_{2}},T+\e_{1}\Delta{T}\right)
$$
be a variation of the solution, and define
$$
i(\e_{1},\e_{2})=\int_a^b g[x^{*}]_\gamma^{\alpha, \beta}(t)dt - C.
$$
The derivative $\dfrac{\partial i}{\partial \e_{2}}$, when $\e_{1}=\e_{2}=0$, is
\begin{equation*}
\left. \dfrac{\partial i}{\partial \e_{2}} \right|_{(0,0)}=\int_a^b \left(
\partial_2 g[x]_\gamma^{\alpha, \beta}(t) h_{2}(t) \
+ \partial_3 g[x]_\gamma^{\alpha, \beta}(t) \DC h_{2}(t) \right)dt.
\end{equation*}
Integrating by parts and choosing variations such that $h_2(b)=0$, we have
$$
\left.\dfrac{\partial i}{\partial \e_{2}} \right|{(0,0)}
=\int_a^b h_{2}(t) \left[ \partial_2 g[x]_\gamma^{\alpha, \beta}(t)
+ D_{\overline{\gamma},b}^{\b,\a}\partial_3g[x]_\gamma^{\alpha, \beta}(t)
\right] dt.
$$
Thus, there exists a function $h_{2}$ such that
$$
\left.\dfrac{\partial i}{\partial \e_{2}} \right|{(0,0)}\not=0.
$$
We may apply the implicit function theorem to conclude that there exists
a subfamily of variations satisfying the integral constraint.
Consider the new function $j(\e_1,\e_2)=\mathcal{J}(x^{*},T^{*})$.
Since $j$ has a local minimum at $(0,0)$ when subject to the constraint
$i(\cdot,\cdot)=0$ and $\nabla i(0,0)\not=0$, there exists a number $\lambda$
such that
\begin{equation}
\label{eqMLgrg}
\dfrac{\partial }{\partial \e_{1}} \left(j-\lambda i\right)(0,0)=0.
\end{equation}
Let $F=L-\lambda g$. Relation \eqref{eqMLgrg} can be written as
\begin{equation*}
\begin{split}
0= &\int_a^T h_{1}(t)\left[\partial_2 F[x]_\gamma^{\alpha, \beta}(t)
+ D_{\overline{\gamma},T}^{\b,\a}\partial_3 L[x]_\gamma^{\alpha, \beta}(t)
-\lambda D_{\overline{\gamma},b}^{\b,\a}\partial_3
g[x]_\gamma^{\alpha, \beta}(t) \right]dt\\
&+ \int_T^b h_{1}(t) \left[\gamma_2 \left(_aD_t^{\b}\partial_3
F[x]_\gamma^{\alpha, \beta}(t)-{_TD_t^{\b}\partial_3
L[x]_\gamma^{\alpha, \beta}(t)} \right) \right.\\
&\qquad \qquad-\left. \lambda \left(\partial_2 g[x]_\gamma^{\alpha, \beta}(t)
+\gamma_1{_tD_b^{\a}} \partial_3 g[x]_\gamma^{\alpha, \beta}(t)\right)\right]dt\\
&+ h_{1}(T)\left[\gamma_1 \, {_tI_T^{1-\a}\partial_3L[x]_\gamma^{\alpha, \beta}(t)}
-{\gamma_2 \, {_TI_t^{1-\b}\partial_3 L[x]_\gamma^{\alpha, \beta}(t)}}
+\partial_2 \phi(t,x(t)) \right]_{t=T}\\
&+\Delta T \left[ L[x]_\gamma^{\alpha, \beta}(t)+\partial_1\phi(t,x(t))
+\partial_2\phi(t,x(t))x'(t)  \right]_{t=T}\\
&+ h_{1}(b) \left[-\lambda \gamma_1 {_tI_b^{1-\a}\partial_3
g[x]_\gamma^{\alpha, \beta}(t)}+\gamma_2 \left({_TI_t^{1-\b}
\partial_3 L[x]_\gamma^{\alpha, \beta}(t)}-{_aI_t^{1-\b}\partial_3
F[x]_\gamma^{\alpha, \beta}(t)} \right) \right]_{t=b}.
\end{split}
\end{equation*}
Considering appropriate choices of variations, we obtain the first \eqref{ELeq_5}
and the second \eqref{ELeq_6} necessary optimality conditions,
and also the transversality conditions \eqref{CT3}.
\end{proof}

Similarly to Theorem~\ref{teo3}, the following result holds.

\begin{theorem}
\label{teo5}
Suppose that $(x,T)$ gives a local minimum  for functional \eqref{funct1} on
$D$ subject to the  boundary condition \eqref{bcxa} and the isoperimetric
constraint \eqref{IsoConst_2}. Then there exists $(\lambda_{0}, \lambda)\neq(0,0)$
such that, if we define the function $F:[a,b]\times \mathbb{R}^2 \to\mathbb{R}$
by $F=\lambda_0L-\lambda g$, $(x,T)$ satisfies the fractional Euler--Lagrange equations
\begin{equation*}
\partial_2 F[x]_\gamma^{\alpha, \beta}(t)
+ D_{\overline{\gamma},T}^{\b,\a}\partial_3 L[x]_\gamma^{\alpha, \beta}(t)
-\lambda D_{\overline{\gamma},b}^{\b,\a}\partial_3 g[x]_\gamma^{\alpha, \beta}(t)=0
\end{equation*}
on the interval $[a,T]$, and
\begin{equation*}
\gamma_2 \left(_aD_t^{\b}\partial_3 F[x]_\gamma^{\alpha, \beta}(t)
-{_TD_t^{\b}\partial_3 L[x]_\gamma^{\alpha, \beta}(t)} \right)
-\lambda \left(\partial_2 g[x]_\gamma^{\alpha, \beta}(t)
+\gamma_1 {_tD_{b}^{\a}}\partial_3 g[x]_\gamma^{\alpha, \beta}(t)\right)=0
\end{equation*}
on the interval $[T,b]$.
\end{theorem}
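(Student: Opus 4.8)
The plan is to follow the strategy of Theorem~\ref{teo3} and argue by cases according to whether the minimizer is already an extremal of the isoperimetric constraint. The point is that Theorem~\ref{teo4} disposes of the \emph{normal} situation, so only the \emph{abnormal} situation needs separate attention, and there the correct choice of multipliers is $(\lambda_0,\lambda)=(0,1)$.

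First I would suppose that $(x,T)$ does \emph{not} satisfy the isoperimetric Euler--Lagrange equation
$$
\partial_2 g[x]_\gamma^{\alpha, \beta}(t)
+D_{\overline{\gamma},b}^{\b,\a}\partial_3 g[x]_\gamma^{\alpha, \beta}(t)=0,
\quad t\in[a,b].
$$
In that case the hypothesis of Theorem~\ref{teo4} is met, and Theorem~\ref{teo4} furnishes a constant $\lambda\neq 0$ for which $F=L-\lambda g$ satisfies the two Euler--Lagrange equations in the statement. Setting $\lambda_0=1$, the pair $(\lambda_0,\lambda)=(1,\lambda)$ is nonzero and $F=\lambda_0 L-\lambda g$ coincides with the function produced by Theorem~\ref{teo4}, so the conclusion of Theorem~\ref{teo5} holds verbatim.

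The remaining case is when $(x,T)$ \emph{does} satisfy the isoperimetric Euler--Lagrange equation on $[a,b]$. Here I would put $\lambda_0=0$ and $\lambda=1$, so that $(\lambda_0,\lambda)=(0,1)\neq(0,0)$ and $F=-g$. With this choice the terms inherited from the Lagrangian $L$ carry the factor $\lambda_0$ and hence vanish from both Euler--Lagrange equations. On $[a,T]$ the first equation collapses to $-\bigl(\partial_2 g+D_{\overline{\gamma},b}^{\b,\a}\partial_3 g\bigr)=0$, while on $[T,b]$ the second equation, after recalling that $D_{\overline{\gamma},b}^{\b,\a}=\gamma_2\,{_aD_t^{\b}}+\gamma_1\,{_tD_b^{\a}}$ by \eqref{aux:FD} with $c=b$, reassembles through $\partial_3 F=-\partial_3 g$ into $-\bigl(\partial_2 g+D_{\overline{\gamma},b}^{\b,\a}\partial_3 g\bigr)=0$ as well. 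Both are exactly the isoperimetric Euler--Lagrange equation assumed to hold in this case, so the two conditions are automatically verified.

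The argument is thus essentially a corollary of Theorem~\ref{teo4}, with no analytic difficulty comparable to the integration-by-parts and implicit-function steps used there. The one point that requires care --- and the only genuine computation --- is the bookkeeping in the abnormal case: one must confirm that setting $\lambda_0=0$ really annihilates every $L$-dependent term and that the surviving $g$-terms on $[T,b]$ recombine, via the definition \eqref{aux:FD} of the dual operator, into precisely the single assumed constraint equation. Once this reduction is checked on each subinterval, nonemptiness of the multiplier pair is immediate, since at least one of $\lambda_0,\lambda$ equals $1$ in each of the two cases.
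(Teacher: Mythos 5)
Your proof is correct and is essentially the argument the paper intends: the paper gives no explicit proof of Theorem~\ref{teo5}, saying only that it holds ``similarly to Theorem~\ref{teo3}'', and that proof is exactly your case split --- invoke Theorem~\ref{teo4} with $\lambda_0=1$ when the constraint's Euler--Lagrange equation fails, and take $(\lambda_0,\lambda)=(0,1)$ when it holds, so that everything collapses to the assumed equation $\partial_2 g+D_{\overline{\gamma},b}^{\b,\a}\partial_3 g=0$. The one caveat is that your assertion that the $\partial_3 L$ terms ``carry the factor $\lambda_0$'' is not literally true of the displayed equations in the statement (the terms $D_{\overline{\gamma},T}^{\b,\a}\partial_3 L$ and $\gamma_2\,{_TD_t^{\b}}\partial_3 L$ appear without a $\lambda_0$), but this is evidently a typographical slip in the paper, since only under your reading do the equations reduce to those of Theorem~\ref{teo4} for $\lambda_0=1$ and to the constraint equation for $\lambda_0=0$; your bookkeeping therefore matches the intended result.
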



\subsection{An example}

Let $\alpha(t,\t)=\alpha(t)$ and $\beta(t,\t)=\beta(\t)$. Define the function
$$
\psi(T)=\int_0^T \left(\dfrac{t^{1-\alpha (t)}}{2 \Gamma(2-\alpha(t))}
+\dfrac{(b-t)^{1-\beta (t)}}{2 \Gamma(2-\beta(t))} \right)^2dt
$$
on the interval $[0,b]$ with $b>0$. Consider the functional $J$ defined by
$$
J(x,t)= \int_0^T \left[\alpha(t)+\left(\DC x(t) \right)^2
+ \left(\dfrac{t^{1-\alpha (t)}}{2 \Gamma(2-\alpha(t))}
+\dfrac{(b-t)^{1-\beta (t)}}{2 \Gamma(2-\beta(t))}\right)^2 \right]dt
$$
for $t \in [0,b]$ and $\gamma =(1/2,1/2)$, subject to the initial condition
$$
x(0)=0
$$
and the isoperimetric constraint
$$
\int_0^T \DC x(t)  \left(\dfrac{t^{1-\alpha (t)}}{2 \Gamma(2-\alpha(t))}
+\dfrac{(b-t)^{1-\beta (t)}}{2 \Gamma(2-\beta(t))}\right)^2 dt=\psi(T).
$$
Define $F=L-\lambda g$ with $\lambda=2$, that is,
$$
F=\alpha(t)+\left({^CD_\gamma^{\alpha(\cdot),\beta(\cdot)}} x(t)
-\frac{t^{1-\alpha(t)}}{2\Gamma(2-\alpha(t))}
-\frac{(b-t)^{1-\beta(t)}}{2\Gamma(2-\beta(t))}\right)^2.
$$
Consider the function $\overline{x}(t)=t$ with $t\in[0,b]$. Because
$$
\DC \overline{x}(t)=\frac{t^{1-\alpha(t)}}{2\Gamma(2-\alpha(t))}
+\frac{(b-t)^{1-\beta(t)}}{2\Gamma(2-\beta(t))},
$$
we have that $\overline x$ satisfies conditions \eqref{ELeq_1}, \eqref{ELeq_2}
and the two last of \eqref{CT1}. Using the first condition of \eqref{CT1}, that is,
$$
\alpha(t)+2\left(\dfrac{T^{1-\alpha (T)}}{2 \Gamma(2-\alpha(T))}
+\dfrac{(b-T)^{1-\beta (T)}}{2 \Gamma(2-\beta(T))} \right)^2=0,
$$
we obtain the optimal time $T$.


\section{Holonomic constraints}
\label{sec:Hol}

Consider the space
\begin{equation}
\label{space_HL1}
U=\lbrace(x_1,x_2,T) \in C^1([a,b]) \times C^1([a,b]) \times[a,b]
: x_1(a)=x_{1a} \wedge x_2(a)=x_{2a}\rbrace
\end{equation}
for fixed reals $x_{1a},x_{2a} \in \mathbb{R}$.
In this section we consider the functional $\mathcal{J}$ defined in $U$ by
\begin{equation}
\label{functHL}
\mathcal{J}(x_1,x_2,T)=\int_a^T L(t,x_1(t),x_2(t), \DC x_1(t), \DC x_2(t))dt
+ \phi(T,x_1(T),x_2(T))
\end{equation}
with terminal time $T$ and terminal states $x_1(T)$ and $x_2(T)$ free.
The Lagrangian $L:[a,b]\times \mathbb{R}^4 \rightarrow \mathbb{R}$ is a
continuous function and continuously differentiable with respect to the its
$i$th argument, $i \in \lbrace 2,3,4,5\rbrace$. To define the variational problem,
we consider a new constraint of the form
\begin{equation}
\label{HLConst}
g(t,x_1(t),x_2(t))=0, \quad t\in [a,b],
\end{equation}
where $g: [a,b]\times\mathbb{R}^2 \rightarrow \mathbb{R}$ is a continuous
function and continuously differentiable with respect to second and third arguments.
This constraint is called a holonomic constraint. The next theorem gives
fractional necessary optimality conditions to the variational problem with
a holonomic constraint. To simplify the notation, we denote by $x$ the vector
$(x_1,x_2)$; by $\DC x$ the vector $(\DC x_1,\DC x_2)$; and we use the operator
$$
[x]_\gamma^{\alpha, \beta}(t):=\left(t, x(t), \DC x(t)\right).
$$

\begin{theorem}
\label{teo7}
Suppose that $(x,T)$ gives a local minimum to functional $\mathcal{J}$ as in
\eqref{functHL}, under the constraint \eqref{HLConst} and the
boundary conditions defined in \eqref{space_HL1}. If
$$
\partial_3g(t,x(t))\neq0 \quad \forall t\in [a,b],
$$
then there exists a piecewise continuous function
$\lambda : [a,b]\rightarrow \mathbb{R}$ such that $(x,T)$
satisfies the following fractional Euler--Lagrange equations:
\begin{equation}
\label{ELeqh_1}
\partial_2 L[x]_\gamma^{\alpha, \beta}(t) + D_{\overline{\gamma},T}^{\b,\a}
\partial_4L[x]_\gamma^{\alpha, \beta}(t) + \lambda(t) \partial_2 g(t,x(t))=0
\end{equation}
and
\begin{equation}
\label{ELeqhPP_1}
\partial_3 L[x]_\gamma^{\alpha, \beta}(t) + D_{\overline{\gamma},T}^{\b,\a}
\partial_5L[x]_\gamma^{\alpha, \beta}(t) + \lambda(t)\partial_3 g(t,x(t))=0
\end{equation}
on the interval $[a,T]$, and
\begin{equation}
\label{ELeqh_2}
\gamma_2\left(_aD_t^{\b} \partial_4 L[x]_\gamma^{\alpha, \beta}(t)
- {_TD_t}^{\b} \partial_4 L[x]_\gamma^{\alpha, \beta}(t)
+ \lambda(t)\partial_2 g(t,x(t))\right)=0
\end{equation}
and
\begin{equation}
\label{ELeqhPP_2}
_aD_t^{\b} \partial_5 L[x]_\gamma^{\alpha, \beta}(t)
- {_TD_t}^{\b} \partial_5 L[x]_\gamma^{\alpha, \beta}(t)
+\lambda(t)\partial_3 g(t,x(t))=0
\end{equation}
on the interval $[T,b]$. Moreover, $(x,T)$ satisfies the transversality conditions
\begin{equation}
\label{CTh1}
\begin{cases}
L[x]_\gamma^{\alpha, \beta}(T)  + \partial_1 \phi(T,x(T))
+ \partial_2 \phi(T,x(T))x'_1(T)+ \partial_3 \phi(T,x(T))x'_2(T) =0,\\
\left[\gamma_1 \, {_tI_T^{1-\a}\partial_4L[x]_\gamma^{\alpha, \beta}(t)}
-{\gamma_2 \, {_TI_t^{1-\b}\partial_4L[x]_\gamma^{\alpha, \beta}(t)}}\right]_{t=T}
+\partial_2 \phi(T,x(T))=0,\\
\left[\gamma_1 {{_tI_T}}^{1-\a} \partial_5 L[x]_\gamma^{\alpha, \beta}(t)
- \gamma_2 {_TI_t}^{1-\b} \partial_5 L[x]_\gamma^{\alpha, \beta}(t)\right]_{t=T}
+\partial_3 \phi(T,x(T))=0,\\
\gamma_2 \left[ {_TI_t^{1-\b}}\partial_4 L[x]_\gamma^{\alpha, \beta}(t)
-{_aI_t^{1-\b}\partial_4 L[x]_\gamma^{\alpha, \beta}(t)}\right]_{t=b}=0\\
\gamma_2 \left[ {_TI_t^{1-\b}}\partial_5 L[x]_\gamma^{\alpha, \beta}(t)
-{_aI_t^{1-\b}\partial_5 L[x]_\gamma^{\alpha, \beta}(t)}\right]_{t=b}=0.
\end{cases}
\end{equation}
\end{theorem}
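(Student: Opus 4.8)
The plan is to adapt the one-variable variational argument from the proof of Theorem~\ref{teo1} to the coupled pair $(x_1,x_2)$, replacing the constant isoperimetric multiplier by a pointwise multiplier function dictated by the holonomic constraint. First I would take variations
$$
(x_1^{*},x_2^{*},T^{*})=\left(x_1+\e h_1,\,x_2+\e h_2,\,T+\e\Delta T\right),
$$
with $h_1,h_2\in C^1([a,b])$ and $h_1(a)=h_2(a)=0$. Since the holonomic constraint \eqref{HLConst} must persist along the whole family, $g(t,x_1^{*}(t),x_2^{*}(t))=0$ for all $t$ and $\e$; differentiating in $\e$ at $\e=0$ produces the pointwise coupling
$$
\partial_2 g(t,x(t))\,h_1(t)+\partial_3 g(t,x(t))\,h_2(t)=0,\quad t\in[a,b].
$$
The hypothesis $\partial_3 g(t,x(t))\neq0$ lets me solve $h_2$ in terms of $h_1$ and, more importantly, is exactly what will make the multiplier well defined.

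Next I would compute $\left.\frac{d}{d\e}\mathcal{J}(x_1^{*},x_2^{*},T^{*})\right|_{\e=0}$ and set it to zero. Expanding $L$ and splitting the combined derivative $\DC h_i=\gamma_1\LC h_i+\gamma_2\RCb h_i$, I apply the fractional integration by parts formula (Theorem~\ref{thm:FIP}) to the terms $\partial_4 L\,\DC h_1$ and $\partial_5 L\,\DC h_2$ exactly as in \eqref{term2}. Each application generates the dual operator $D_{\overline{\gamma},T}^{\b,\a}$ on $[a,T]$, the residual right-derivative terms carried by $\gamma_2$ on $[T,b]$ (because $\RCb$ is nonlocal, the integral over $[a,T]$ is rewritten as $\int_a^b-\int_T^b$, forcing the $[a,T]$/$[T,b]$ split seen in Theorem~\ref{teo1}), and boundary contributions at $t=T$ and $t=b$. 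The result has the schematic form
$$
\int_a^T\!\big(A_1 h_1+A_2 h_2\big)\,dt+\int_T^b\!\big(B_1 h_1+B_2 h_2\big)\,dt+(\text{boundary terms})=0,
$$
where $A_1=\partial_2 L+D_{\overline{\gamma},T}^{\b,\a}\partial_4 L$, $A_2=\partial_3 L+D_{\overline{\gamma},T}^{\b,\a}\partial_5 L$, and $B_1,B_2$ are the corresponding $\gamma_2$-weighted right-side expressions appearing in \eqref{ELeqh_2} and \eqref{ELeqhPP_2}.

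The central step is to define the multiplier so that it absorbs the $h_2$-coefficient. I would set $\lambda(t)=-A_2(t)/\partial_3 g(t,x(t))$ on $[a,T]$ and, analogously, fix $\lambda$ on $[T,b]$ by requiring the $\partial_5 L$-equation \eqref{ELeqhPP_2} to hold; the two branches make $\lambda$ piecewise continuous, with a possible jump at $T$. By construction these are precisely \eqref{ELeqhPP_1} and \eqref{ELeqhPP_2}. Substituting $A_2=-\lambda\,\partial_3 g$ (resp.\ its $[T,b]$ analogue) and eliminating $h_2$ through $\partial_3 g\,h_2=-\partial_2 g\,h_1$ converts each integrand into a multiple of $h_1$ alone, namely $(A_1+\lambda\,\partial_2 g)h_1$ on $[a,T]$ and the corresponding expression on $[T,b]$. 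Choosing $\Delta T=0$ and $h_1$ supported first in $(a,T)$ and then in $(T,b)$, the fundamental lemma of the calculus of variations yields \eqref{ELeqh_1} and \eqref{ELeqh_2}.

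Finally, once all four Euler--Lagrange equations hold the interior integrals vanish and only the boundary terms at $t=T$ and $t=b$ remain. Making independent choices of $h_1(T),h_2(T),h_1(b),h_2(b)$ and $\Delta T$, and using the increment-in-time expansion as in the proof of Theorem~\ref{teo2} to isolate the coefficient of $\Delta T$, delivers the five transversality conditions \eqref{CTh1}. I expect the main obstacle to be the bookkeeping around the multiplier: unlike the isoperimetric problems, $\lambda$ is here a function pinned down by one Euler--Lagrange equation, and one must verify that this same $\lambda$ makes the companion equation (coming from the coupled variation $h_2$) consistent. This consistency is guaranteed because the constraint couples $h_1$ and $h_2$ precisely through the ratio $\partial_2 g/\partial_3 g$ that enters the definition of $\lambda$, so no independent second multiplier is needed.
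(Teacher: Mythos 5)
Your proposal matches the paper's proof essentially step for step: the same one-parameter variation with the constraint differentiated to couple $h_1$ and $h_2$, the same fractional integration by parts producing the $[a,T]$/$[T,b]$ split, the same definition of the piecewise multiplier $\lambda$ as $-(\partial_3 L + D_{\overline{\gamma},T}^{\b,\a}\partial_5 L)/\partial_3 g$ on $[a,T]$ (and its analogue on $[T,b]$) so that \eqref{ELeqhPP_1}--\eqref{ELeqhPP_2} hold by construction, followed by elimination of $h_2$ and the fundamental lemma to get \eqref{ELeqh_1}, \eqref{ELeqh_2} and the transversality conditions. No substantive differences to report.
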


\begin{proof}
Consider admissible variations of the optimal solution $(x,T)$ of the type
\begin{equation*}
(x^{*},T^{*})=\left(x+\e h,T+\e \Delta{T}\right),
\end{equation*}
where $\e \in\mathbb{R}$ is a small parameter, $h=(h_1,h_2)\in C^1([a,b])
\times C^1([a,b])$ satisfies $h_{i}(a)=0$, $i=1,2$, and $\triangle T \in\mathbb{R}$.
Because
$$
\partial_3g(t,x(t))\neq0 \quad \forall t\in [a,b],
$$
by the implicit function theorem there exists a subfamily of variations of
$(x,T)$ that satisfy \eqref{HLConst}, that is, there exists a unique function
$h_2(\e ,h_1)$ such that the admissible variation ($x^{*}, T^*$) satisfies
the holonomic constraint \eqref{HLConst}:
$$
g(t,x_1(t)+\e h_1(t),x_2(t)+\e h_2)=0 \quad \forall t\in [a,b].
$$
Differentiating this condition with respect to $\e$
and considering $\e =0$, we obtain that
\begin{equation*}
\partial_2 g(t,x(t)) h_{1}(t) + \partial_3 g(t,x(t)) h_{2}(t) = 0,
\end{equation*}
which is equivalent to
\begin{equation}
\label{eqh1}
\dfrac{\partial_2 g(t,x(t)) h_{1}(t)}{\partial_3 g(t,x(t))}=-h_{2}(t).
\end{equation}
Define $j$ on a neighbourhood of zero by
$$
j(\e )=\int_a^{T+\e  \triangle T} L[x^{*}]_\gamma^{\alpha, \beta}(t)dt
+\phi(T+\e \triangle T, x^{*}(T+\e \triangle T)).
$$
The derivative $\dfrac{\partial j}{\partial \e }$ for $\e =0$ is
\begin{equation}
\label{eqh2}
\begin{split}
\left. \dfrac{\partial j}{\partial \e } \right|_{\e=0}
= \int_a^{T} & \left(\partial_2 L[x]_\gamma^{\alpha, \beta}(t) h_{1}(t)
+\partial_3 L[x]_\gamma^{\alpha, \beta}(t) h_{2}(t) \right. \\
& \left.+ \partial_4 L[x]_\gamma^{\alpha, \beta}(t) \DC h_{1}(t)
+ \partial_5 L[x]_\gamma^{\alpha, \beta}(t) \DC h_{2}(t)\right)dt\\
& +L[x]_\gamma^{\alpha, \beta}(T) \triangle T + \partial_1 \phi(T,x(T))\triangle T
+ \partial_2 \phi(T,x(T))\left[h_1(T)+x'_1(T)\triangle T\right]\\
& + \partial_3 \phi(T,x(T))\left[h_2(T)+x'_2(T)\triangle T\right].
\end{split}
\end{equation}
The third term in \eqref{eqh2} can be written as
\begin{equation}
\label{term3}
\begin{split}
\int_a^T  &\partial_4 L[x]_\gamma^{\alpha, \beta}(t) \DC h_{1}(t) dt\\
&=\int_a^T \partial_4 L[x]_\gamma^{\alpha, \beta}(t)\left[\gamma_1
\, \LC h_{1}(t)+\gamma_2 \, \RCb h_{1}(t)\right]dt\\
&=\gamma_1 \int_a^T \partial_4 L[x]_\gamma^{\alpha, \beta}(t)\LC h_{1}(t)dt\\
&\quad + \gamma_2 \left[ \int_a^b
\partial_4 L[x]_\gamma^{\alpha, \beta}(t)\RCb h_{1}(t)dt
- \int_T^b \partial_4 L[x]_\gamma^{\alpha, \beta}(t) \RCb h_{1}(t)dt \right].
\end{split}
\end{equation}
Integrating by parts, \eqref{term3} can be written as
\begin{equation*}
\begin{split}
\int_a^T h_{1}(t) & \left[ \gamma_1 {_tD_T}^{\a} \partial_4
L[x]_\gamma^{\alpha, \beta}(t) + \gamma_2 {_aD_t}^{\b} \partial_4
L[x]_\gamma^{\alpha, \beta}(t) \right] dt\\
+ &  \int_T^b \gamma_2 h_{1}(t) \left[ _aD_t^{\b} \partial_4
L[x]_\gamma^{\alpha, \beta}(t) - {_TD_t}^{\b} \partial_4
L[x]_\gamma^{\alpha, \beta}(t) \right] dt\\
+ & \Biggl[ h_{1}(t) \left( \gamma_1 {{_tI_T}}^{1-\a} \partial_4
L[x]_\gamma^{\alpha, \beta}(t) - \gamma_2 {_TI_t}^{1-\b} \partial_4
L[x]_\gamma^{\alpha, \beta}(t) \Biggr) \right]_{t=T}\\
+ & \Biggl[ \gamma_2 h_{1}(t)  \left( {{_TI_t}}^{1-\b} \partial_4
L[x]_\gamma^{\alpha, \beta}(t) - {_aI_t}^{1-\b} \partial_4
L[x]_\gamma^{\alpha, \beta}(t) \Biggr) \right]_{t=b}.
\end{split}
\end{equation*}
By proceeding similarly to the $4$th term in \eqref{eqh2},
we obtain an equivalent expression. Substituting these relations into
\eqref{eqh2} and considering the fractional operator
$D_{\overline{\gamma},c}^{\b,\a}$ as defined in \eqref{aux:FD}, we obtain that
\begin{equation}
\label{eqh3}
\begin{split}
0= \int_a^T &\Biggl[ h_{1}(t) \left[ \partial_2
L[x]_\gamma^{\alpha, \beta}(t) + D_{\overline{\gamma},T}^{\b,\a}\partial_4
L[x]_\gamma^{\alpha, \beta}(t) \right]\biggr. \\
& \qquad + \biggl. h_{2}(t) \left[ \partial_3 L[x]_\gamma^{\alpha, \beta}(t)
+ D_{\overline{\gamma},T}^{\b,\a}\partial_5
L[x]_\gamma^{\alpha, \beta}(t) \right] \biggr]dt\\
+ &  \gamma_2\int_T^b  \Biggl[h_{1}(t) \left[ _aD_t^{\b} \partial_4
L[x]_\gamma^{\alpha, \beta}(t) - {_TD_t}^{\b} \partial_4
L[x]_\gamma^{\alpha, \beta}(t) \right] \biggr.\\
& \qquad +\Biggl.h_{2}(t) \left[ _aD_t^{\b} \partial_5
L[x]_\gamma^{\alpha, \beta}(t) - {_TD_t}^{\b} \partial_5
L[x]_\gamma^{\alpha, \beta}(t) \right]dt \biggr]\\
+ & h_{1}(T)\Biggl[  \gamma_1 {{_tI_T}}^{1-\a} \partial_4
L[x]_\gamma^{\alpha, \beta}(t) - \gamma_2 {_TI_t}^{1-\b} \partial_4
L[x]_\gamma^{\alpha, \beta}(t) +\partial_2 \phi(t,x(t))\Biggr] _{t=T}\\
+ & h_{2}(T)\Biggl[   \gamma_1 {{_tI_T}}^{1-\a} \partial_5
L[x]_\gamma^{\alpha, \beta}(t) - \gamma_2 {_TI_t}^{1-\b} \partial_5
L[x]_\gamma^{\alpha, \beta}(t) + \partial_3 \phi(t,x(t))\Biggr]_{t=T}\\
+ &\triangle T \bigg[ L[x]_\gamma^{\alpha, \beta}(t) + \partial_1 \phi(t,x(t))
+ \partial_2 \phi(t,x(t))x'_1(t)
+ \partial_3 \phi(t,x(t))x'_2(t) \Biggr]_{t=T}\\
+ & h_{1}(b)\Biggl[ \gamma_2   \left( {{_TI_t}}^{1-\b} \partial_4
L[x]_\gamma^{\alpha, \beta}(t) - {_aI_t}^{1-\b} \partial_4
L[x]_\gamma^{\alpha, \beta}(t) \Biggr) \right]_{t=b}\\
+ & h_{2}(b)\Biggl[ \gamma_2   \left( {{_TI_t}}^{1-\b} \partial_5
L[x]_\gamma^{\alpha, \beta}(t) - {_aI_t}^{1-\b} \partial_5
L[x]_\gamma^{\alpha, \beta}(t) \Biggr) \right]_{t=b}.
\end{split}
\end{equation}
Define the piecewise continuous function $\lambda$ by
\begin{equation}
\label{lambdah1}
\lambda (t)
=
\begin{cases}
-\dfrac{\partial_3L[x]_\gamma^{\alpha, \beta}(t)
+ D_{\overline{\gamma},T}^{\b,\a}\partial_5L[x]_\gamma^{\alpha,
\beta}(t)}{\partial_3 g(t,x(t))}, & t\in [a,T] \\
-\dfrac{_aD_t^{\b}\partial_5L[x]_\gamma^{\alpha, \beta}(t)
-_TD_t^{\b}\partial_5L[x]_\gamma^{\alpha, \beta}(t)}{\partial_3 g(t,x(t))},
& t\in [T,b].
\end{cases}
\end{equation}
Using equations \eqref{eqh1} and \eqref{lambdah1}, we obtain that
\begin{equation*}
\lambda(t)\partial_2 g(t,x(t)) h_{1}(t)
=
\begin{cases}
(\partial_3L[x]_\gamma^{\alpha, \beta}(t) +D_{\overline{\gamma},T}^{\b,\a}
\partial_5L[x]_\gamma^{\alpha, \beta}(t)) h_2(t), & t\in [a,T] \\
(_aD_t^{\b}\partial_5L[x]_\gamma^{\alpha, \beta}(t)
-_TD_t^{\b}\partial_5L[x]_\gamma^{\alpha, \beta}(t)) h_2(t),  & t\in [T,b].
\end{cases}
\end{equation*}
Substituting in \eqref{eqh3}, we have
\begin{equation*}
\begin{split}
0= & \int_a^T h_{1}(t) \left[ \partial_2 L[x]_\gamma^{\alpha, \beta}(t)
+ D_{\overline{\gamma},T}^{\b,\a}\partial_4L[x]_\gamma^{\alpha, \beta}(t)
+ \lambda(t)\partial_2 g(t,x(t))\right] dt\\
+ &  \gamma_2\int_T^b  h_{1}(t) \left[ _aD_t^{\b} \partial_4
L[x]_\gamma^{\alpha, \beta}(t) - {_TD_t}^{\b} \partial_4
L[x]_\gamma^{\alpha, \beta}(t) + \lambda(t)\partial_2 g(t,x(t))\right] dt \\
+ & h_{1}(T)\Biggl[  \gamma_1 {{_tI_T}}^{1-\a} \partial_4
L[x]_\gamma^{\alpha, \beta}(t) - \gamma_2 {_TI_t}^{1-\b} \partial_4
L[x]_\gamma^{\alpha, \beta}(t) +\partial_2 \phi(t,x(t))\Biggr] _{t=T}\\
+ & h_{2}(T)\Biggl[   \gamma_1 {{_tI_T}}^{1-\a} \partial_5
L[x]_\gamma^{\alpha, \beta}(t) - \gamma_2 {_TI_t}^{1-\b} \partial_5
L[x]_\gamma^{\alpha, \beta}(t) + \partial_3 \phi(t,x(t))\Biggr]_{t=T}\\
+ &\triangle T \bigg[ L[x]_\gamma^{\alpha, \beta}(t)
+ \partial_1 \phi(t,x(t))+ \partial_2 \phi(t,x(t))x'_1(t)
+ \partial_3 \phi(t,x(t))x'_2(t) \Biggr]_{t=T}\\
\end{split}
\end{equation*}
\begin{equation*}
\begin{split}
+ & h_{1}(b)\Biggl[ \gamma_2   \left( {{_TI_t}}^{1-\b} \partial_4
L[x]_\gamma^{\alpha, \beta}(t) - {_aI_t}^{1-\b} \partial_4
L[x]_\gamma^{\alpha, \beta}(t) \Biggr) \right]_{t=b}\\
+ & h_{2}(b)\Biggl[ \gamma_2   \left( {{_TI_t}}^{1-\b} \partial_5
L[x]_\gamma^{\alpha, \beta}(t) - {_aI_t}^{1-\b} \partial_5
L[x]_\gamma^{\alpha, \beta}(t) \Biggr) \right]_{t=b}.
\end{split}
\end{equation*}
Considering appropriate choices of variations, we obtained the first \eqref{ELeqh_1}
and the third \eqref{ELeqh_2} necessary conditions, and also the transversality
conditions \eqref{CTh1}. The remaining conditions \eqref{ELeqhPP_1}
and \eqref{ELeqhPP_2} follow directly from \eqref{lambdah1}.
\end{proof}

We end this section with a simple illustrative example.
Consider the following problem:
\begin{equation*}
\begin{gathered}
J(x,t)= \int_0^T \Biggl[\alpha(t)+\left(\DC x_1(t) 
-\dfrac{t^{1-\alpha(t)}}{2 \Gamma(2-\alpha(t))}
-\dfrac{(b-t)^{1-\beta (t)}}{2 \Gamma(2-\beta(t))}\right)^2 \\
+ \left(\DC x_2(t)\right)^2\Biggr]dt \longrightarrow\min,\\
x_1(t) + x_2(t) = t+1,\\
x_1(0)=0, \quad x_2(0) = 1.
\end{gathered}
\end{equation*}
It is a simple exercise to check that $x_1(t) = t$,
$x_2(t) \equiv 1$ and $\lambda(t) \equiv 0$ 
satisfy our Theorem~\ref{teo7}.


\section{Conclusion}

Nowadays, optimization problems involving fractional derivatives constitute 
a very active research field due to several applications \cite{MR3443073,MR3331286,MR2984893}.
Here we obtained optimality conditions for two isoperimetric problems
and for a new variational problem subject to a holonomic constraint, 
where the Lagrangian depends on a combined Caputo derivative of 
variable fractional order. Main results include Euler--Lagrange  
and transversality type conditions. For simplicity, we considered 
here only linear combinations between the left and right operators. 
Using similar techniques as the ones developed here, one can obtain 
analogous results for fractional variational problems with Lagrangians
containing left-sided and right-sided fractional derivatives of variable order.
More difficult and interesting, would be to develop
a ``multi-term fractional calculus of variations''.
The question seems however nontrivial, even for the nonvariable order case, 
because of difficulties in application of integration by parts.
For the variable order case, as we consider in our work, 
there is yet no formula of fractional integration by parts
for higher-order derivatives. This is under investigation
and will be addressed elsewhere.


\section*{Acknowledgments}

This work is part of first author's Ph.D., which is carried out at the
University of Aveiro under the Doctoral Programme \emph{Mathematics
and Applications} of Universities of Aveiro and Minho.
It was supported by Portuguese funds through the
\emph{Center for Research and Development in Mathematics and Applications} (CIDMA),
and the \emph{Portuguese Foundation for Science and Technology} (FCT),
within project UID/MAT/04106/2013. Tavares was also supported
by FCT through the Ph.D. fellowship SFRH/BD/42557/2007.
The authors are grateful to four Reviewers, for several
pertinent questions and remarks, which improved the final version
of the manuscript.



\end{document}